\newtheorem{theorem}{Theorem}[section]
\newtheorem{lemma}[theorem]{Lemma}
\newcommand{\Z}{{\mathbb Z}}
\newcommand{\downarrowright}[1]{\downarrow
\rlap{\raise0.1cm\hbox{$\scriptstyle{#1}$}}}
\newcommand{\downarrowleft}[1]{\rlap{\kern-0.2cm
\raise0.1cm\hbox{$\scriptstyle{#1}$}}\downarrow}
\newcommand{\uparrowright}[1]{\uparrow
\rlap{\lower0.1cm\hbox{$\scriptstyle{#1}$}}}
\newcommand{\uparrowleft}[1]{\rlap{\kern-0.2cm
\lower0.1cm\hbox{$\scriptstyle{#1}$}}\uparrow}
\def\umono{\ar@{_{(}->}[u]}
\def\uumono{\ar@{_{(}->}[uu]}
\def\lmono{\ar@{_{(}->}[l]}
\def\llmono{\ar@{_{(}->}[ll]}
\begin{document}

\title{The 2-torsion in the second homology of the genus $3$ mapping class group}

 \date{26 October,  2013}
\author{Wolfgang Pitsch}
\address{Universitat Aut\`onoma de Barcelona \\ Departament de Matem\`atiques. E-08193 Bellaterra, Spain}
\email{pitsch@mat.uab.es}

\thanks{The author was partially supported  by FEDER/MEC grant MTM2010-20622.} 

\subjclass[2010]{Primary 20J06, Secondary 57M60}

\keywords{Mapping class group, lantern relation}

\begin{abstract}
 This work is NOT to be used as reference. First, because as C.F.~B\"odigheimer and M.~Korkmaz pointed to us  the computation of the $\Z_2$ factor that remained undecided in M.~Korkmaz and A. Stipsicz, {\em The second homology groups of mapping class groups of orientable surfaces.} Math. Proc. Camb. Phil. Soc., was shown to exist by Skasai, see hi Theorem 4.9 and Corollary 4.10 in {\em Lagrangian mapping class groups from a group homological point of view.} Algebr. Geom. Topol. 12 (2012), no. 1, 267--291. Second, because one could obtain this result by gathering old results in the literature,  first by noticing as Korkmaz kindly reminded me, that D.~Johnson, in  \emph{Homeomorphisms of a surface which act trivially on homology} Porc. AMS Volume 75, Number 1, 1979.  proved that the quotient of the Torelli group $\mathcal{T}_g/[\mathcal{T}_g,\mathcal{M}_g]$ is trivial for $g\geq 3$,  the five term exact sequence then implies that the $\Z_2$ factor in Stein's computation of $H_2(Sp(6,\mathbf{Z});\mathbf{Z}) = \Z \oplus\Z_2$ (see his {\em The Schur Multipliers of $Sp_6(\mathbf{Z}), Spin_8(\mathbf{Z}), Spin_7(\mathbf{Z}),$ and $F_4(\mathbf{Z})$.} Math. Ann. 215 (1975), 173--193. ), detects the undecided $\Z_2$ factor in $H_2(\mathbf{M}_3;\Z)$.
 
 \end{abstract}

 \maketitle


\section{Introduction}

Denote by $\Sigma_{g,n}^r$  an oriented surface of genus $g$ with $n$ boundary components and $r$ punctures and by $\mathcal{M}_{g,n}^r$ its mapping class group, that is the group of isotopy classes of orientation-preserving diffeomorphisms of $\Sigma_{g,n}^r$ that are the identity on the boundary and fix the punctures.  Also denote by $\mathbf{Z}_2$ the mod $2$ reduction of $\mathbf{Z}$. In a famous paper \cite{Harer} Harer computed the second homology group  of mapping class group for $g \geq 5$. Then, using a presentation of $\mathcal{M}_{g,1}$ given by Wajnryb in \cite{Wajnryb} we showed in \cite{Pitsch} that Harer's computations could be obtained from Hopf's formula and extended to $g \geq 4$, yielding moreover an explicit generator for this group. Later on in \cite{Korkmaz} Korkmaz and Stipsicz pushed this computations to encompass the remaining genus $g=2,3$ and $n \geq 2, r \geq 1$. Notice that for $g=2$ Benson and Cohen had computed the Poincar\'e series of $H_\ast(\mathcal{M}_2;\mathbf{Z}_2)$.  
Unfortunately a small gap remained after Korkmaz and Stipsicz computations: they showed that  $H_2(\mathcal{M}_3;\mathbf{Z}) \simeq \mathbf{Z} \oplus A $ and  $H_2(\mathcal{M}_{3,1};\mathbf{Z}) \simeq \mathbf{Z} \oplus B$, where $0 \leq B \leq A \leq \mathbf{Z}_2$. The purpose of this note is to close this gap and to finally prove:

\begin{theorem}
We have $H_2(\mathcal{M}_{3}; \mathbf{Z}) \simeq \mathbf{Z}\oplus \mathbf{Z}/2$ and $H_2(\mathcal{M}_{3,1}; \mathbf{Z}) \simeq \mathbf{Z}\oplus \mathbf{Z}/2.$
\end{theorem}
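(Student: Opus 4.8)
The plan is to close the gap left by Korkmaz--Stipsicz, namely to show that the ambiguous factor $A$ (and hence $B$) equals $\mathbf{Z}/2$ rather than $0$. Since the abstract itself sketches a cleaner argument using the literature, I would organize the proof around the five-term exact sequence associated to the extension
\[
1 \to \mathcal{T}_3 \to \mathcal{M}_3 \to Sp(6,\mathbf{Z}) \to 1,
\]
where $\mathcal{T}_3$ is the Torelli group. The goal is to isolate the $\mathbf{Z}/2$ summand as coming from $H_2(Sp(6,\mathbf{Z});\mathbf{Z})$, and to verify that the contribution of the Torelli group does not kill it.

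First I would invoke Stein's computation $H_2(Sp(6,\mathbf{Z});\mathbf{Z}) \cong \mathbf{Z} \oplus \mathbf{Z}/2$. Next I would write down the (low-degree) five-term exact sequence for the extension above:
\[
H_2(\mathcal{M}_3;\mathbf{Z}) \to H_2(Sp(6,\mathbf{Z});\mathbf{Z}) \to H_0(Sp(6,\mathbf{Z}); H_1(\mathcal{T}_3;\mathbf{Z})) \to H_1(\mathcal{M}_3;\mathbf{Z}) \to H_1(Sp(6,\mathbf{Z});\mathbf{Z}) \to 0.
\]
The key input here is Johnson's theorem that the coinvariants $H_0(Sp(6,\mathbf{Z});H_1(\mathcal{T}_3;\mathbf{Z})) = \mathcal{T}_3/[\mathcal{T}_3,\mathcal{M}_3]$ vanish for $g \geq 3$. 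Exactness then forces the map $H_2(\mathcal{M}_3;\mathbf{Z}) \to H_2(Sp(6,\mathbf{Z});\mathbf{Z})$ to be surjective, so the $\mathbf{Z}/2$ in Stein's group is hit by a class in $H_2(\mathcal{M}_3;\mathbf{Z})$. Combined with the Korkmaz--Stipsicz bound $A \leq \mathbf{Z}/2$, this pins down $H_2(\mathcal{M}_3;\mathbf{Z}) \cong \mathbf{Z}\oplus\mathbf{Z}/2$.

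The main obstacle, and the step requiring genuine care, is verifying that the detected $\mathbf{Z}/2$ is not already contained in the free part and genuinely records a $2$-torsion class, rather than a $\mathbf{Z}$ that maps onto $\mathbf{Z}/2$ through multiplication. Concretely, one must check that the composite $H_2(\mathcal{M}_3;\mathbf{Z}) \to H_2(Sp(6,\mathbf{Z});\mathbf{Z}) \to \mathbf{Z}/2$ is nontrivial \emph{on the torsion}, which follows once surjectivity is established together with the abelian-group structure bound $A \leq \mathbf{Z}/2$: a surjection from $\mathbf{Z}\oplus A$ onto $\mathbf{Z}\oplus\mathbf{Z}/2$ with $A$ cyclic of order at most $2$ is only possible if $A = \mathbf{Z}/2$. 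For the statement about $\mathcal{M}_{3,1}$, I would then transfer the result along the central extension
\[
1 \to \mathbf{Z} \to \mathcal{M}_{3,1} \to \mathcal{M}_3^1 \to 1
\]
relating the once-bounded and once-punctured mapping class groups, using the Korkmaz--Stipsicz inequality $B \leq A$ together with a Gysin or transfer argument to show the $\mathbf{Z}/2$ survives, giving $H_2(\mathcal{M}_{3,1};\mathbf{Z}) \cong \mathbf{Z}\oplus\mathbf{Z}/2$ as well.

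Alternatively, and this is likely the route the author in fact takes given the paper's title emphasizing the lantern relation, one can argue directly inside $\mathcal{M}_{3,1}$ by exhibiting an explicit $2$-torsion class via the lantern relation in a Hopf-formula presentation (as in \cite{Pitsch}), and showing it is nonzero by mapping to a computable quotient such as $Sp(6,\mathbf{Z})$ or a finite quotient thereof. I expect the delicate point in that approach to be the nonvanishing check: producing the relation is mechanical, but certifying that its homology class has order exactly $2$ requires a faithful enough target, which is exactly where Stein's explicit Schur multiplier computation re-enters.
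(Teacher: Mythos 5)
Your proposal is correct in outline and its skeleton --- the five-term exact sequence for $1 \to \mathcal{T}_\ast \to \mathcal{M}_\ast \to Sp(6,\mathbf{Z}) \to 1$, Stein's computation $H_2(Sp(6,\mathbf{Z});\mathbf{Z})\simeq\mathbf{Z}\oplus\mathbf{Z}/2$, and the observation that a surjection $\mathbf{Z}\oplus A \twoheadrightarrow \mathbf{Z}\oplus\mathbf{Z}/2$ with $A$ of order at most $2$ forces $A\simeq\mathbf{Z}/2$ --- is exactly the paper's. The divergence is in how the crucial vanishing $(H_1(\mathcal{T}_\ast;\mathbf{Z}))_{Sp(6,\mathbf{Z})}=\mathcal{T}_\ast/[\mathcal{T}_\ast,\mathcal{M}_\ast]=0$ is obtained: you quote it as a theorem of Johnson (a legitimate shortcut, which the paper's abstract itself acknowledges), whereas the body of the paper proves it from scratch, and that proof is the entire mathematical content of the note. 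Concretely, the paper reduces to the bounded case by right-exactness of coinvariants, uses Johnson's generation of $\mathcal{T}_{3,1}$ by genus-$1$ bounding-pair maps together with transitivity of the mapping class group action (and a class swapping the two curves of a pair) to see the coinvariants are cyclic of order at most $2$, kills the $B^2_{3,1}$ piece of Johnson's abelianization by an explicit transvection computation, and finally invokes the lantern relation to write a genus-$2$ separating twist --- whose class lies in the already-killed $B^2_{3,1}$ piece --- as a product of three bounding-pair maps, forcing $0=t^3=t$. So your guess about where the lantern relation enters is off: it is not used in a Hopf-formula presentation but precisely in this coinvariant computation. Two smaller points: the worry that the $\mathbf{Z}/2$ might hide in the free part is already dispatched by the non-cyclicity argument you give, so no further care is needed there; and for $\mathcal{M}_{3,1}$ the detour through a central extension and a Gysin or transfer argument is unnecessary and would be the weakest link in your write-up --- the same five-term sequence applies verbatim to $\mathcal{T}_{3,1}\subset\mathcal{M}_{3,1}$ (the paper in fact proves the vanishing for $\mathcal{T}_{3,1}$ first and deduces the closed case from it), which settles $B\simeq\mathbf{Z}/2$ by the identical cyclicity argument.
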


As observed by Korkmaz and Stipsicz the computations using Hopf's formula show that the  homomorphism  induced by capping-off the boundary component $H_2(\mathcal{M}_{3,1}; \mathbf{Z})\rightarrow H_2(\mathcal{M}_{3}; \mathbf{Z})$ is surjective and hence, from our computation, an isomorphism.

\section{Proof of the Theorem}

Denote by $\mathcal{T}_{3}$ or $\mathcal{T}_{3,1}$ accordingly the Torelli groups, that is the kernel of the surjective map from the mapping class group onto the  symplectic group with integer entries $Sp(6,\mathbf{Z})$. By computations of Stein \cite{Stein} we know that $H_2(\mathrm{Sp}(6;\mathbf{Z});\mathbf{Z}) \simeq \mathbf{Z}\oplus \mathbf{Z}/2$. Since mapping class groups of genus $g \geq 3$ are perfect, the $5$ term exact sequence in low dimensional homology gives a small exact sequence:
\[
\xymatrix{
 H_2(\mathcal{M}_{\ast};\mathbf{Z}) \ar[r] & H_2(\mathrm{Sp}(6;\mathbf{Z});\mathbf{Z}) \ar[r] & (H_1(\mathcal{T}_\ast;\mathbf{Z}))_{\mathrm{Sp}(6;\mathbf{Z})} \ar[r] & 0 
}
\]
Where $\ast$ denotes either $g$ or $g,1$ and in view of Stein's result all we have to do is to show that $H_1(\mathcal{T}_\ast;\mathbf{Z}))_{\mathrm{Sp}(6;\mathbf{Z})} \simeq {\mathcal{T}}_{\ast}/[\mathcal{T}_{\ast},\mathcal{M}_{\ast}] = 0$.

By Johnson's fundamental result the Torelli group $\mathcal{T}_{3}$ and $\mathcal{T}_{3,1}$ are generated by twists along bounding pairs of genus $1$, that is by mapping classes of the form $T_\alpha T_\beta^{-1}$, where $\alpha$ and $\beta$ are two simple closed curves that are not isotopic, homologous, not homologous to $0$ and such that the complement of $\{ \alpha, \beta\}$ has a component of genus $1$. In particular capping-off the boundary component induces a surjective map $H_1(\mathcal{T}_{g,1};\mathbf{Z}) \rightarrow H_1(\mathcal{T}_{g};\mathbf{Z})$ and since taking coinvariants is a right-exact functor it is enough to prove that
\[
H_1(\mathcal{T}_{g,1};\mathbf{Z})_{\mathrm{Sp}(6;\mathbf{Z})} \simeq \mathcal{T}_{g,1}/[\mathcal{T}_{g,1},\mathcal{M}_{g,1}] = 0.
\]

The mapping class group acts transitively on bounding pairs of genus $1$, hence the group $ \mathcal{T}_{g,1}/[\mathcal{T}_{g,1},\mathcal{M}_{g,1}]$ is monogenic generated by the class of any bounding pair map. Also, given a bounding pair $\{ \alpha, \beta\}$ there exists a mapping class, say $\phi$, that exchanges $\alpha$ and $\beta$, hence in $ \mathcal{T}_{g,1}/[\mathcal{T}_{g,1},\mathcal{M}_{g,1}]$:
\[
T_\alpha {T_\beta}^{-1} = \phi T_\alpha {T_\beta}^{-1} \phi^{-1} = T_{\phi(\alpha)} {T_{\phi(\beta)}}^{-1} = T_{\beta}{T_{\alpha}}^{-1} =  (T_\alpha T_\beta^{-1})^{-1},
\] 
and  $\mathcal{T}_{g,1}/[\mathcal{T}_{g,1},\mathcal{M}_{g,1}]$ is at most $\mathbf{Z}_2$.

In \cite{JohnIII} Johnson computed the abelianization of the Torelli group $\mathcal{T}_{g,1}$, it fits into a short exact sequence:
\[
\xymatrix{
0 \ar[r] & B^2_{3,1} \ar[r] & H_1(\mathcal{T}_{g,1}; \mathbf{Z}) \ar[r] & \Lambda^3 H \ar[r] & 0, 
}
\]
where:
\begin{enumerate}
\item $H$ stands for the homology group $\mathrm{H}_1(\Sigma_{3,1};\mathbf{Z})$,
\item $B^2_{3,1}$ is the Boolean algebra of polynomials of degree $\leq 2$ generated by the elements $\overline{x} \in H$ and subject to the relations:
\begin{itemize}
\item $\overline{x + y} = \overline{x} + \overline{y} + x\cdot y$, where $x\cdot y$ is the mod $2$ intersection number,
\item $\overline{x}^2 = \overline{x}$.
\end{itemize}
\end{enumerate}

Notice that this is a short exact sequence of $\mathrm{Sp}(6;\mathbf{Z})$-modules, where the action on the quotient is simply given by the third exterior power of the action on homology and the action of the kernel is given on generators by $\phi(\overline{x}) = \overline{\phi(x)}$ extended in the obvious way. Finally, this kernel is a $\mathbf{Z}_2$-vector space and is the image in $H_1(\mathcal{T}_{g,1};\mathbf{Z})$ of the Johnson subgroup $\mathcal{K}_{g,1}$,  the subgroup generated by twists along bounding simple closed curves.

\begin{lemma}
The image of $B^2_{3,1}$ in $H_1(\mathcal{T}_{g,1};\mathbf{Z})_{\mathrm{Sp}(6;\mathbf{Z})}$ is trivial.
\end{lemma}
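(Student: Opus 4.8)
The plan is to show that the generators $\overline{x}$ of $B^2_{3,1}$, as elements of the coinvariants $H_1(\mathcal{T}_{3,1};\mathbf{Z})_{\mathrm{Sp}(6;\mathbf{Z})}$, all become trivial. Since $B^2_{3,1}$ is a $\mathbf{Z}_2$-vector space generated (as an algebra) by the degree $\leq 1$ and degree $2$ monomials in the $\overline{x}$, and since the $\mathrm{Sp}$-action is by $\phi(\overline{x}) = \overline{\phi(x)}$, the key is to exhibit, for each relevant generator, a symplectic element $\phi$ together with the relation $\phi\cdot v - v \equiv 0$ forced in the coinvariants. Concretely, in the coinvariants we may replace $\overline{\phi(x)}$ by $\overline{x}$ for every $\phi \in \mathrm{Sp}(6;\mathbf{Z})$, and I would use this freedom to collapse the generators against each other and against the defining relations of the Boolean algebra.

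First I would record the defining relations precisely: $\overline{x+y} = \overline{x} + \overline{y} + x\cdot y$ and $\overline{x}^2 = \overline{x}$, together with $\overline{0} = 0$. The first relation already shows $B^2_{3,1}$ is generated additively by the classes $\overline{e_i}$ of a symplectic basis $e_1,\dots,e_6$ of $H = \mathrm{H}_1(\Sigma_{3,1};\mathbf{Z})$ and the products $\overline{e_i}\,\overline{e_j}$; so it suffices to kill these finitely many classes in the coinvariants. For the linear generators, I would pick $\phi \in \mathrm{Sp}(6;\mathbf{Z})$ with $\phi(e_i) = -e_i$ (available since $-\id$ and suitable block maps lie in $\mathrm{Sp}$): then $\overline{-e_i} = \overline{e_i}$ in the coinvariants, while the Boolean relations give $\overline{-e_i} = \overline{e_i} + (e_i\cdot e_i) = \overline{e_i}$ — so this particular $\phi$ is not enough, and I must instead compare $\overline{e_i}$ with $\overline{e_j}$ for two basis vectors using a symplectic transvection, then exploit a relation of the form $\overline{e_i + e_j} - \overline{e_i} - \overline{e_j} = e_i\cdot e_j$ to produce an honest cancellation.

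The heart of the argument is to produce, for each class, an element $\phi$ whose action moves it by a nonzero amount that the relations force to vanish. The natural tool is a symplectic transvection or a basis permutation $\phi$ realizing $\phi(x) = y$ for homologous-in-$\mathbf{Z}_2$ but genuinely different $x,y$; then $\overline{x} = \overline{y}$ in the coinvariants, and subtracting the two instances of the relation $\overline{x+z} = \overline{x}+\overline{z}+x\cdot z$ along a well-chosen auxiliary $z$ isolates a single generator and equates it to $0$. I expect to run this once for the degree-one classes and once more, using a transvection that fixes some basis vectors and shifts others, for the quadratic classes $\overline{e_i}\,\overline{e_j}$, reducing everything to the images of a small set of standard generators that the relations then annihilate.

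**The hard part will be** the bookkeeping of the $\mathrm{Sp}(6;\mathbf{Z})$-action interacting with the nonlinear Boolean relation $\overline{x+y} = \overline{x}+\overline{y}+x\cdot y$: because the generator $\overline{x}$ is \emph{not} a linear function of $x$, the naive cancellation $\overline{\phi x} = \overline{x}$ does not immediately kill $\overline{x}$, and one must choose the symplectic elements so that the correction terms $x\cdot y$ conspire to leave a lone generator equal to its own negative or to zero. The essential obstacle is therefore to find a family of transvections whose induced relations in the coinvariants span enough of $B^2_{3,1}$ — and the clean way to organize this is to show the coinvariants of $B^2_{3,1}$ are already trivial as an abstract $\mathrm{Sp}(6;\mathbf{Z})$-module, after which the image in the coinvariants of $H_1(\mathcal{T}_{3,1};\mathbf{Z})$ is \emph{a fortiori} trivial by right-exactness.
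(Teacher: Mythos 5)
Your reduction --- by right-exactness of coinvariants --- to showing that $(B^2_{3,1})_{\mathrm{Sp}(6;\mathbf{Z})}$ itself vanishes is exactly the reduction the paper makes, and your transvection mechanism (use $\tau_a(\overline{b})=\overline{a+b}=\overline{a}+\overline{b}+a\cdot b$ to force $\overline{a}=a\cdot b=1$ in the coinvariants) is the paper's first step; it does show that every degree-one monomial is congruent to the constant, and similar manipulations handle the products $\overline{x}\,\overline{y}$ with $x\cdot y=0$. But the plan cannot be completed for the quadratic monomials $\overline{x}\,\overline{y}$ with $x\cdot y=1$, and this is not a bookkeeping issue: the coinvariants $(B^2_{3,1})_{\mathrm{Sp}(6;\mathbf{Z})}$ are \emph{not} trivial. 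Define a $\mathbf{Z}_2$-linear functional $f$ on the monomial basis $\{1,\ \overline{e_i},\ \overline{e_i}\,\overline{e_j}\}$ by $f(1)=f(\overline{e_i})=0$ and $f(\overline{e_i}\,\overline{e_j})=e_i\cdot e_j$. Expanding $\overline{x}=\sum_i x_i\overline{e_i}+\sum_{i<j}x_ix_j\,(e_i\cdot e_j)$ gives $f(\overline{x})=0$ and $f(\overline{x}\,\overline{y})=x\cdot y$ for all $x,y\in H$; since every $\phi\in\mathrm{Sp}(6;\mathbf{Z})$ acts by a ring automorphism preserving the mod $2$ intersection form, $f\circ\phi=f$. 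Hence $f$ descends to a surjection $(B^2_{3,1})_{\mathrm{Sp}(6;\mathbf{Z})}\twoheadrightarrow\mathbf{Z}_2$ detecting the class of $\overline{e_1}\,\overline{e_2}$ when $e_1\cdot e_2=1$: no family of transvections, indeed no relation of the form $\phi v-v$ with $v\in B^2_{3,1}$, can kill that class.

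You should know that the paper's own proof founders on the same rock: its assertion that ``all monomials of degree $1$ or $2$ are in fact constants'' in the coinvariants fails precisely for $\overline{x}\,\overline{y}$ with $x\cdot y=1$, which is consistent with the warning in the paper's abstract that it is not to be used as a reference. The lemma itself is true, but any proof must leave the submodule $B^2_{3,1}$: right-exactness only gives exactness of $(B^2_{3,1})_{\mathrm{Sp}}\to H_1(\mathcal{T}_{3,1};\mathbf{Z})_{\mathrm{Sp}}\to(\Lambda^3H)_{\mathrm{Sp}}\to 0$, and the first map is not injective. The relation that kills the surviving $\mathbf{Z}_2$ must come from elements $\phi w-w$ with $w\in H_1(\mathcal{T}_{3,1};\mathbf{Z})$ mapping nontrivially to $\Lambda^3H$ --- concretely, from a geometric identity (such as the lantern relation) expressing a separating twist, whose class in $B^2_{3,1}$ has $f=1$, in terms of bounding-pair maps --- or one can simply invoke Johnson's theorem that $\mathcal{T}_{g}/[\mathcal{T}_{g},\mathcal{M}_{g}]=0$ for $g\geq3$. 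That extra input is the missing idea in your proposal.
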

\begin{proof}
This image is a quotient of $(B^2_{3,1})_{\mathrm{Sp}(6;\mathbf{Z})}$, so it suffices to prove that this group is trivial. First notice that since the mod $2$ symplectic form is non-degenerated any non-zero element in $H$ can be completed into a symplectic basis, and in particular $\mathrm{Sp}(6;\mathbf{Z})$ acts transitively on the non-zero elements in $H$. Let $a,b$ be two elements such that $a\cdot b =1$, then if $\tau_a$ denotes the transvection along $a$, we have $\tau_a(\overline{b})= \overline{a +b} = \overline{a} + \overline{b} +1$, and in  $(B^2_{3,1})_{\mathrm{Sp}(6;\mathbf{Z})}$ this gives that $\overline{a} = 1$, hence in $(B^2_{3,1})_{\mathrm{Sp}(6;\mathbf{Z})}$ all monomials of degree $1$ or $2$ are in fact constants and this group is at most $\mathbf{Z}_2$. Finally, let $c \in H$ be such that $a\cdot c = 0 = b \cdot c$. In  $(B^2_{3,1})_{\mathrm{Sp}(6;\mathbf{Z})}$ we have:
\[
\begin{array}{rcccl}
 1 & = & \overline{a}\overline{b} & = & \tau_{b+c}(\overline{a} \overline{b}) \\
 & = & \overline{a + b+ c} \ \overline{b} & =&  (\overline{a} + \overline{b} + \overline{c} +1)\overline{b} \\ 
 & = & \overline{a}\overline{b} + \overline{b}^2 + \overline{c}\overline{b} +\overline{b} & = &  1 + 1  +1 +1  \\
 & =&  0. &&
\end{array}
\]
\end{proof}

To conclude  we apply the lantern relation:

\begin{center}
\begin{picture}(0,0)%
\includegraphics{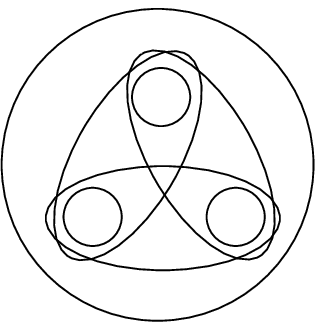}%
\end{picture}%
\setlength{\unitlength}{4144sp}%
\begingroup\makeatletter\ifx\SetFigFont\undefined%
\gdef\SetFigFont#1#2#3#4#5{%
  \reset@font\fontsize{#1}{#2pt}%
  \fontfamily{#3}\fontseries{#4}\fontshape{#5}%
  \selectfont}%
\fi\endgroup%
\begin{picture}(1442,1903)(180,-1104)
\put(581,-336){\makebox(0,0)[b]{\smash{{\SetFigFont{8}{9.6}{\rmdefault}{\mddefault}{\updefault}$\beta_1$}}}}
\put(910,224){\makebox(0,0)[b]{\smash{{\SetFigFont{8}{9.6}{\rmdefault}{\mddefault}{\updefault}$\beta_2$}}}}
\put(1281,-357){\makebox(0,0)[b]{\smash{{\SetFigFont{8}{9.6}{\rmdefault}{\mddefault}{\updefault}$\beta_3$}}}}
\put(936,700){\makebox(0,0)[b]{\smash{{\SetFigFont{8}{9.6}{\rmdefault}{\mddefault}{\updefault}$\beta_0$}}}}
\put(1373, 65){\makebox(0,0)[lb]{\smash{{\SetFigFont{8}{9.6}{\rmdefault}{\mddefault}{\updefault}$\gamma_{23}$}}}}
\put(918,-672){\makebox(0,0)[b]{\smash{{\SetFigFont{8}{9.6}{\rmdefault}{\mddefault}{\updefault}$\gamma_{13}$}}}}
\put(446, 65){\makebox(0,0)[rb]{\smash{{\SetFigFont{8}{9.6}{\rmdefault}{\mddefault}{\updefault}$\gamma_{12}$}}}}
\put(967,-1040){\makebox(0,0)[b]{\smash{{\SetFigFont{10}{12.0}{\rmdefault}{\mddefault}{\updefault}$T_{\beta_0} T_{\beta_1} T_{\beta_2} T_{\beta_3} = T_{\gamma_{12}}T_{\gamma_{13}} T_{\gamma_{23}}$}}}}
\end{picture}%

\end{center}

to the following curves (we only draw the four boundary curves).
\begin{center}
\begin{picture}(0,0)%
\includegraphics{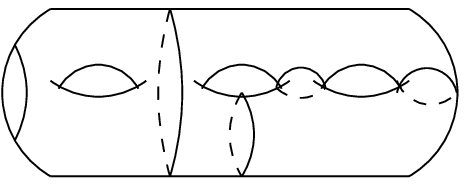}%
\end{picture}%
\setlength{\unitlength}{4144sp}%
\begingroup\makeatletter\ifx\SetFigFont\undefined%
\gdef\SetFigFont#1#2#3#4#5{%
  \reset@font\fontsize{#1}{#2pt}%
  \fontfamily{#3}\fontseries{#4}\fontshape{#5}%
  \selectfont}%
\fi\endgroup%
\begin{picture}(2133,1090)(268,-737)
\put(1036,254){\makebox(0,0)[b]{\smash{{\SetFigFont{8}{9.6}{\rmdefault}{\mddefault}{\updefault}$\beta_0$}}}}
\put(1666,-331){\makebox(0,0)[b]{\smash{{\SetFigFont{8}{9.6}{\rmdefault}{\mddefault}{\updefault}$\beta_2$}}}}
\put(1396,-691){\makebox(0,0)[b]{\smash{{\SetFigFont{8}{9.6}{\rmdefault}{\mddefault}{\updefault}$\beta_1$}}}}
\put(2386,-196){\makebox(0,0)[lb]{\smash{{\SetFigFont{8}{9.6}{\rmdefault}{\mddefault}{\updefault}$\beta_3$}}}}
\end{picture}%

\end{center}
The lantern relation  shows that the twist around $\beta_0$, a bounding simple closed curve of genus $2$ can be written as the product of three  twists around bounding pairs of genus $1$:
\[
T_{\beta_0}=T_{\gamma_{12}}T_{\beta_3}^{-1} T_{\gamma_{13}}T_{\beta_2}^{-1}T_{\gamma_{23}}T_{\beta_1}^{-1}
\]
If $t$ denotes the generator of $H_1(\mathcal{T}_{g,1};\mathbf{Z})_{\mathrm{Sp}(6;\mathbf{Z})}$, which is of order $2$, then this equation becomes $0 = t^3$, and this finishes the proof.

\end{document}